\newtheorem{theorem}{Theorem}
\newtheorem{lemma}[theorem]{Lemma}
\newtheorem{corollary}[theorem]{Corollary}
\newcommand{\beq}{\begin{equation}}
\newcommand{\eeq}{\end{equation}}
\newcommand{\bea}{\begin{array}}
\newcommand{\eea}{\end{array}}
\title{On convergence of the Flint Hills series}
\author{Max A. Alekseyev\thanks{Department of Computer Science and Engineering,
University of South Carolina, Columbia, SC, U.S.A.\newline{Email: maxal@cse.sc.edu}} }
\begin{document}
\maketitle

\begin{abstract}
It is not known whether the Flint Hills series $\sum_{n=1}^{\infty} \frac{1}{n^3\cdot\sin(n)^2}$ converges.
We show that this question is closely related to the irrationality measure of $\pi$, denoted $\mu(\pi)$. 
In particular, convergence of the Flint Hills series would imply $\mu(\pi) \leq 2.5$ which is much stronger than the best currently known upper bound $\mu(\pi)\leq 7.6063\ldots$.

This result easily generalizes to series of the form $\sum_{n=1}^{\infty} \frac{1}{n^u\cdot |\sin(n)|^v}$ where $u,v>0$. 
We use the currently known bound for $\mu(\pi)$ to derive conditions on $u$ and $v$ that guarantee convergence of such series.
\end{abstract}

\section{Introduction}

Pickover~\cite{Pickover2002} defined the \emph{Flint Hills series} as $\sum_{n=1}^{\infty} \frac{1}{n^3\cdot\sin(n)^2}$ (named after Flint Hills, Kansas) and questioned whether it converges.
It was noticed that behavior of the partial sums of this series is closely connected to the rational approximations to $\pi$. 
In this paper we give a formal description of this connection, proving that
convergence of the Flint Hills series would imply an upper bound $2.5$ for the irrationality measure of $\pi$ which is much stronger 
than the best currently known bound $7.6063\ldots$ obtained by Salikhov~\cite{Salikhov2008}.
A rather slow progress in evaluating the irrationality measure of $\pi$ over past decades~\cite{Mahler1953,Mignotte1974,Chudnovsky1982,Hata1990,Hata1993a,Hata1993b,Salikhov2008} indicates the hardness of
this problem and suggests that the question of the Flint Hills series' convergence would unlikely be resolved in the nearest future.

The \emph{irrationality measure} $\mu(x)$ of a positive real number $x$ is defined as the infimum of such $m$ that the inequality
$$ 0 < \left|x - \frac{p}{q}\right| < \frac{1}{q^m}$$
holds only for a finite number of co-prime positive integers $p$ and $q$. If no such $m$ exists, then $\mu(x) = +\infty$ (in which case $x$ is called \emph{Liouville number}).

Informally speaking, the larger is $\mu(x)$, the better $x$ is approximated by rational numbers.
It is known that $\mu(x)=1$ if $x$ is a rational number; $\mu(x)=2$ if $x$ is irrational algebraic number (Roth's theorem~\cite{Roth1955} for which Roth was awarded the Fields Medal); 
and $\mu(x)\geq 2$ if $x$ is a transcendental number. Proving that $\mu(x) > 1$ is a traditional way to establish irrationality of $x$, with the most remarkable example 
of the $\zeta(3)$ irrationality (where $\zeta(s)=\sum_{n=1}^{\infty} n^{-s}$ is the Riemann zeta function) proved by Apery~\cite{Apery1979,Poorten1979}.


\section{Convergence of the Flint Hills series}

\begin{lemma}\label{Lsinx} For a real number $x$, we have
$$|\sin(x)| \leq |x|.$$
Furthermore, if $|x|\leq\nicefrac{\pi}{2}$ then
$$|\sin(x)| \geq \frac{2}{\pi}\cdot |x|.$$

\end{lemma}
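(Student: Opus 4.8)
The plan is to use the evenness of both $|\sin x|$ and $|x|$ to reduce everything to $x \geq 0$, and then to prove the first inequality by splitting at $x = \pi/2$. For $x \geq \pi/2$ the estimate is immediate, since $|\sin x| \leq 1 \leq \pi/2 \leq x$. On the remaining interval $[0,\pi/2]$ I would introduce the auxiliary function $h(x) = x - \sin x$, observe that $h(0) = 0$, and note that $h'(x) = 1 - \cos x \geq 0$ everywhere; hence $h$ is nondecreasing and therefore nonnegative on $[0,\pi/2]$, which gives $\sin x \leq x$ there. Combining the two ranges yields $|\sin x| \leq |x|$ for all real $x$. (Equivalently one can invoke the concavity of $\sin$ on $[0,\pi]$: a concave function lies below its tangent line at $0$, which is the line $y = x$, so $\sin x \leq x$ on $[0,\pi]$, and $\sin x \leq 1 \leq x$ for $x \geq \pi$; or one can use the classical geometric comparison $\sin x \leq x \leq \tan x$ obtained from the areas of a triangle, a circular sector, and a larger triangle.)

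For the lower bound — which is the classical Jordan inequality — I would again reduce to $x \in [0,\pi/2]$ by evenness and study $g(x) = \sin x - \tfrac{2}{\pi}\,x$. The endpoint values are $g(0) = 0$ and $g(\pi/2) = 1 - 1 = 0$, and $g''(x) = -\sin x \leq 0$ on $[0,\pi/2]$, so $g$ is concave on this interval. A concave function that vanishes at both endpoints of an interval lies above the chord joining those endpoints, which here is identically zero; hence $g(x) \geq 0$ on $[0,\pi/2]$, that is, $\sin x \geq \tfrac{2}{\pi}\,x$, and therefore $|\sin x| \geq \tfrac{2}{\pi}\,|x|$ whenever $|x| \leq \pi/2$.

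I do not expect any genuine obstacle: both claims are routine consequences of the monotonicity and concavity properties of $\sin$ on $[0,\pi/2]$, and the argument is entirely elementary. The only point that requires a little care is keeping track of the different ranges of validity — the upper bound holds for all real $x$, whereas the lower bound is only local — so the first part must be supplemented by the trivial estimate outside $[-\pi/2,\pi/2]$, and in the second part one must not forget to check that $g$ vanishes at the right endpoint, since it is this double vanishing together with concavity that forces $g \geq 0$.
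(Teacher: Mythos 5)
Your proof is correct. For the upper bound your argument is essentially the paper's in disguise: the paper writes $|\sin(x)| = \bigl|\int_0^x \cos y\,\mathrm{d}y\bigr| \leq \int_0^{|x|} 1\,\mathrm{d}y = |x|$, which is exactly your observation that $h(x) = x - \sin x$ has $h(0)=0$ and $h'=1-\cos\geq 0$; your extra case split at $x=\pi/2$ is harmless but unnecessary, since $h'\geq 0$ holds on all of $[0,\infty)$. For the lower (Jordan) bound, however, you take a genuinely different route. The paper stays with first-derivative information: it splits $[0,\pi/2]$ at $x_0=\arccos(2/\pi)$, bounds $\cos$ below by $2/\pi$ on $[0,x_0]$ to get $\sin x = \int_0^x\cos y\,\mathrm{d}y \geq \frac{2}{\pi}x$ there, and bounds $\cos$ above by $2/\pi$ on $[x_0,\pi/2]$ to get $\sin x = 1 - \int_x^{\pi/2}\cos y\,\mathrm{d}y \geq \frac{2}{\pi}x$ there. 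You instead use the second derivative: $g(x)=\sin x - \frac{2}{\pi}x$ is concave on $[0,\pi/2]$ and vanishes at both endpoints, hence lies above the zero chord. Your version avoids locating the crossover point $\arccos(2/\pi)$ and the two-case bookkeeping, at the cost of invoking the chord property of concave functions; the paper's version uses nothing beyond monotonicity of $\cos$ and direct integral comparison. Both are complete and elementary, and you correctly flag the one subtlety that matters downstream, namely that the lower bound is only valid for $|x|\leq\pi/2$, which is precisely how the lemma is applied in Theorem~\ref{Tmain}.
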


\begin{proof} The former bound follows from the integral estimate
$$|\sin(x)| = \left| \int_0^x \cos y\cdot\mathrm{d}y \right| \leq \int_0^{|x|} | \cos y |\cdot\mathrm{d}y \leq \int_0^{|x|} 1\cdot\mathrm{d}y = |x|,$$

To prove the latter bound, we notice that $|\sin(x)|=\sin(|x|)$ and without loss of generality assume that $0\leq x\leq \nicefrac{\pi}{2}$. 
Let $x_0 = \arccos(\nicefrac{2}{\pi})$ so that for $x\leq x_0$ we have $\cos(x)\geq \nicefrac{2}{\pi}$ and thus
$$\sin(x) = \int_0^x \cos(y)\cdot\mathrm{d}y \geq \int_0^x \frac{2}{\pi}\cdot\mathrm{d}y = \frac{2}{\pi}\cdot x,$$
while for $x\geq x_0$ we have $\cos(x)\leq \nicefrac{2}{\pi}$ and thus
$$\sin(x) = 1 - \int_x^{\nicefrac{\pi}2} \cos(y)\cdot\mathrm{d}y \geq 1 - \int_x^{\nicefrac{\pi}2} \frac{2}{\pi}\cdot\mathrm{d}y 
= 1 - \frac{2}{\pi}\cdot\left(\frac{\pi}{2} - x\right) = \frac{2}{\pi}\cdot x.$$
\end{proof}

\begin{theorem}\label{Tmain}
For positive real numbers $u$ and $v$, 
$\frac{1}{n^u\cdot |\sin(n)|^v} = O\left(\frac{1}{n^{u - (\mu(\pi)-1)\cdot v - \epsilon}}\right)$ for any $\epsilon>0$.
Furthermore,
\begin{enumerate}
\item If $\mu(\pi)< 1+\nicefrac{u}{v}$, the sequence $\frac{1}{n^u\cdot |\sin(n)|^v}$ converges (to zero);

\item If $\mu(\pi) > 1+\nicefrac{u}{v}$, the sequence $\frac{1}{n^u\cdot |\sin(n)|^v}$ diverges.
\end{enumerate}
\end{theorem}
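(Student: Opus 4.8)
The plan is to rewrite $|\sin(n)|$ in terms of how well $\pi$ is approximated by a suitable rational $n/k$, and then feed in the two complementary sides of the definition of $\mu(\pi)$ separately.

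First I would fix, for each positive integer $n$, the nearest integer $k_n$ to $n/\pi$, so that $|n - k_n\pi| = \pi\,|n/\pi - k_n| \le \pi/2$. Since $\sin$ has period $\pi$ up to sign, $|\sin(n)| = |\sin(n - k_n\pi)|$, and both parts of Lemma~\ref{Lsinx} (the lower bound being available because $|n - k_n\pi|\le\pi/2$) give
$$\frac{2}{\pi}\,|n - k_n\pi| \;\le\; |\sin(n)| \;\le\; |n - k_n\pi|.$$
I would also record the elementary facts that $1 \le k_n \le n/\pi + 1/2 \le n$ for all large $n$, and that $k_n\to\infty$; these let me pass between estimates phrased in terms of $k_n$ and in terms of $n$.

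Next, the $O$-estimate and part~(1). Fix $\epsilon>0$ and put $\mu=\mu(\pi)$. Applying the definition of the irrationality measure with exponent $\mu+\epsilon>\mu$, only finitely many coprime pairs $(p,q)$ satisfy $0<|\pi-p/q|<q^{-(\mu+\epsilon)}$; dividing out the gcd and absorbing those finitely many exceptions into a constant, one obtains a constant $c=c(\epsilon)>0$ with $|\pi-p/q|\ge c\,q^{-(\mu+\epsilon)}$ for \emph{all} positive integers $p,q$. Taking $p=n$, $q=k_n$ and using $|n-k_n\pi|=k_n|\pi-n/k_n|$, $k_n\le n$, and $\mu-1+\epsilon>0$ (valid since $\mu(\pi)\ge 2$), I get $|n-k_n\pi|\ge c\,k_n^{-(\mu-1+\epsilon)}\ge c\,n^{-(\mu-1+\epsilon)}$, hence $|\sin(n)|\ge \tfrac{2c}{\pi}\,n^{-(\mu-1+\epsilon)}$ for all large $n$. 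Thus $\frac{1}{n^u|\sin(n)|^v}\le(\tfrac{\pi}{2c})^v\,n^{-(u-(\mu-1)v-\epsilon v)}$, and since $\epsilon>0$ was arbitrary (rename $\epsilon v$ as $\epsilon$) this is the stated $O$-bound. Part~(1) is then immediate: if $\mu<1+u/v$ the exponent $u-(\mu-1)v$ is positive, so for $\epsilon$ small enough $\frac{1}{n^u|\sin(n)|^v}=O(n^{-\delta})$ with $\delta>0$, which tends to zero.

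For part~(2) I would invoke the approximation side of the definition. Assuming $\mu>1+u/v$, choose $m$ with $1+u/v<m<\mu$. Because $m$ lies below the infimum $\mu$, the finiteness condition fails at exponent $m$, so there are infinitely many coprime pairs $(p,q)$ with $0<|\pi-p/q|<q^{-m}$, i.e.\ $|p-q\pi|<q^{-(m-1)}$; along them $p,q\to\infty$ (a fixed $q$ admits only boundedly many such $p$, and a fixed $p$ none once $q$ is large). From $|p-q\pi|<1$ we get $q>p/(2\pi)$, so $|\sin(p)|=|\sin(p-q\pi)|\le|p-q\pi|<q^{-(m-1)}<(2\pi)^{m-1}\,p^{-(m-1)}$. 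Hence along the subsequence $n=p$,
$$\frac{1}{n^u\,|\sin(n)|^v}\;>\;(2\pi)^{-(m-1)v}\,n^{(m-1)v-u},$$
and $(m-1)v-u>0$ by the choice of $m$, so this subsequence tends to $+\infty$; therefore $\frac{1}{n^u|\sin(n)|^v}$ is unbounded and, in particular, diverges.

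The step I expect to require the most care is the one in the $O$-estimate that upgrades the coprime-pairs formulation of $\mu(\pi)$ into the uniform lower bound $|\pi-p/q|\ge c(\epsilon)\,q^{-(\mu+\epsilon)}$ holding for \emph{every} pair $(p,q)$ with $q\ge 1$ — one must dispose of both the non-coprime pairs and the finitely many exceptional coprime pairs — together with verifying that $n\mapsto k_n$ really does satisfy $k_n\le n$ and $k_n\to\infty$, so that the bound is not lost under the substitution $q=k_n$. Everything after that is a matter of the elementary inequalities in Lemma~\ref{Lsinx} and bookkeeping of exponents.
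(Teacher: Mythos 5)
Your proof is correct and follows essentially the same route as the paper's: reduce $|\sin(n)|$ to $|n-k_n\pi|$ via Lemma~\ref{Lsinx} and then feed in the two complementary sides of the definition of $\mu(\pi)$. The only real difference is in part~(2), where the paper takes the exponent exactly $k=1+u/v$ and concludes non-convergence by pairing a subsequence bounded below by a positive constant with a second subsequence at $n=1+p_i$ that tends to zero, whereas you take the exponent strictly between $1+u/v$ and $\mu(\pi)$ and obtain a subsequence tending to $+\infty$ outright — a marginally cleaner endgame; your explicit disposal of the non-coprime and finitely many exceptional pairs in the $O$-estimate is also more careful than the paper's corresponding step.
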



\begin{proof} Let $\epsilon>0$ and $k=\mu(\pi)+\nicefrac{\epsilon}{v}$. Then
the inequality
\begin{equation}\label{Epiapprox}
\left|\pi - \frac{p}{q}\right| < \frac{1}{q^k}
\end{equation}
holds only for a finite number of co-prime positive integers $p$ and $q$. 

For a positive integer $n$, let $m = \left\lfloor \nicefrac{n}{\pi} \right\rfloor$ so that $\left|\nicefrac{n}{\pi}-m\right|\leq \nicefrac{1}{2}$ and thus 
$\left|n-m\cdot\pi\right|\leq \nicefrac{\pi}{2}$. Then by Lemma~\ref{Lsinx},
$$|\sin(n)| = |\sin(n-m\cdot \pi)| \geq \frac{2}{\pi}\cdot |n-m\cdot \pi| = \frac{2}{\pi}\cdot m\cdot\left|\frac{n}{m}-\pi\right|.$$

On the other hand, for large enough $n$ and $m$, we have $|\nicefrac{n}{m}-\pi|\geq \nicefrac{1}{m^k}$, implying that
$$|\sin(n)| \geq \frac{2}{\pi}\cdot m\cdot\left|\frac{n}{m}-\pi\right| \geq \frac{2}{\pi}\cdot \frac{1}{m^{k-1}} \geq c\cdot \frac{1}{n^{k-1}}$$
for some constant $c>0$ depending only on $k$ but not $n$ (since $\nicefrac{n}{m}$ tends to $\pi$ as $n$ grows).

Therefore, for all large enough $n$, we have 
$$\frac{1}{n^u\cdot |\sin(n)|^v} \leq \frac{1}{c^v\cdot n^{u-(k-1)\cdot v}} = O\left(\frac{1}{n^{u - (\mu(\pi)-1)\cdot v - \epsilon}}\right).$$

The statement 1 now follows easily. If $\mu(\pi)<1+\nicefrac{u}{v}$, we take $\epsilon = \nicefrac{v}{2}\cdot (1+\nicefrac{u}{v} - \mu(\pi))$ to obtain
$$\frac{1}{n^u\cdot |\sin(n)|^v} = O\left(\frac{1}{n^{u - v\cdot (\mu(\pi)-1) - \epsilon}}\right) = O\left(\frac{1}{n^{\epsilon}}\right).$$


Now let us prove statement 2. If $\mu(\pi) > 1+\nicefrac{u}{v}$, then for $k=1+\nicefrac{u}{v}$ the inequality \eqref{Epiapprox} 
holds for infinitely many co-prime positive integers $p$ and $q$. That is, there exists a sequence of rationals $\nicefrac{p_i}{q_i}$ such that
$\left|p_i - \pi\cdot q_i\right| < \frac{1}{q_i^{k-1}}$. Then
$$|\sin(p_i)| = |\sin(p_i-q_i\cdot \pi)| \leq |p_i-q_i\cdot \pi| < \frac{1}{q_i^{k-1}} < C\cdot \frac{1}{p_i^{k-1}}$$
for some constant $C>0$ depending only on $k$.

Therefore, for $n=p_i$ we have
$$\frac{1}{n^u\cdot |\sin(n)|^v} > C^v\cdot n^{v\cdot(k-1)-u} = C^v.$$
On the other hand, we have
$$|\sin(1+p_i)| = |\sin(1+p_i-q_i\cdot \pi)|\quad \mathop{\longrightarrow}\limits_{i\to\infty}\quad \sin(1)$$
and thus
$$\frac{1}{(1+p_i)^u\cdot |\sin(1+p_i)|^v}\quad \mathop{\longrightarrow}\limits_{i\to\infty}\quad 0.$$
We conclude that the sequence $\frac{1}{n^u\cdot |\sin(n)|^v}$ diverges, since it contains two subsequences one which is bounded from below by a positive constant, while the other tends to zero.

\end{proof}


\begin{corollary}\label{Cor1} For positive real numbers $u$ and $v$,
\begin{enumerate} 
\item If the sequence $\frac{1}{n^u\cdot |\sin(n)|^v}$ converges, then $\mu(\pi)\leq 1+\nicefrac{u}{v}$;

\item If the sequence $\frac{1}{n^u\cdot |\sin(n)|^v}$ diverges, then $\mu(\pi) \geq 1+\nicefrac{u}{v}$.
\end{enumerate}
\end{corollary}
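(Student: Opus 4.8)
The plan is to obtain Corollary~\ref{Cor1} directly from Theorem~\ref{Tmain} by contraposition, handling the two parts separately. No new analytic estimate is required: each statement of the corollary is simply the contrapositive of the corresponding part of the theorem, and the only things to keep track of are the trichotomy $\mu(\pi) < 1 + \nicefrac{u}{v}$, $\mu(\pi) = 1 + \nicefrac{u}{v}$, $\mu(\pi) > 1 + \nicefrac{u}{v}$, together with the elementary fact that a real sequence cannot at the same time converge and diverge.

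For statement 1, I would argue by contradiction: assume the sequence $\frac{1}{n^u\cdot|\sin(n)|^v}$ converges but, contrary to the claim, $\mu(\pi) > 1 + \nicefrac{u}{v}$. Then statement 2 of Theorem~\ref{Tmain} applies and forces this sequence to diverge, contradicting the assumed convergence; hence $\mu(\pi) \leq 1 + \nicefrac{u}{v}$. Statement 2 is symmetric: assume the sequence diverges but $\mu(\pi) < 1 + \nicefrac{u}{v}$; then statement 1 of Theorem~\ref{Tmain} says the sequence converges to zero, again a contradiction, so $\mu(\pi) \geq 1 + \nicefrac{u}{v}$.

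I do not expect any real obstacle here — the mathematical content lives entirely in Theorem~\ref{Tmain}, and the corollary is a two-line logical repackaging. The only point deserving a moment's care is that the borderline case $\mu(\pi) = 1 + \nicefrac{u}{v}$ must not be accidentally excluded: it is not, since Theorem~\ref{Tmain} makes no assertion in that case and both conclusions of the corollary are stated with non-strict inequalities, so each implication remains vacuously compatible with the boundary value.
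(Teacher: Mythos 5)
Your argument is correct and is exactly the intended one: the paper states this corollary without proof precisely because each part is the contrapositive of the corresponding part of Theorem~\ref{Tmain}, with the borderline case $\mu(\pi)=1+\nicefrac{u}{v}$ absorbed by the non-strict inequalities. Nothing further is needed.
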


\begin{corollary} 
If the Flint Hills series $\sum_{n=1}^{\infty} \frac{1}{n^3\cdot \sin(n)^2}$ converges, then $\mu(\pi) \leq \nicefrac{5}{2}$.
\end{corollary}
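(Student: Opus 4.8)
The plan is to deduce this immediately from Corollary~\ref{Cor1} by specializing $u$ and $v$. First I would invoke the elementary fact that a necessary condition for convergence of a series $\sum_{n=1}^\infty a_n$ is that its terms tend to zero; in particular, if the Flint Hills series $\sum_{n=1}^\infty \frac{1}{n^3\cdot\sin(n)^2}$ converges, then the sequence $a_n = \frac{1}{n^3\cdot\sin(n)^2}$ converges (to zero). Note that here $\sin(n)^2 = |\sin(n)|^2$, so this sequence is exactly $\frac{1}{n^u\cdot|\sin(n)|^v}$ with $u=3$ and $v=2$.

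Next I would apply part~1 of Corollary~\ref{Cor1} with these values of $u$ and $v$. Since the sequence $\frac{1}{n^3\cdot|\sin(n)|^2}$ converges, the corollary yields $\mu(\pi)\leq 1+\nicefrac{u}{v} = 1+\nicefrac{3}{2} = \nicefrac{5}{2}$, which is the desired conclusion. That is the entire argument.

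There is essentially no obstacle here: the only thing to be careful about is the distinction between convergence of the \emph{series} and convergence of the underlying \emph{sequence of terms}, and the one-line observation that the former implies the latter bridges that gap. Everything else is a direct substitution into the previously established corollary.

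For completeness I might also remark that, conversely, the corollary only concerns the sequence of terms and not the series itself, so convergence of the Flint Hills series is in fact a strictly stronger hypothesis than what the proof uses; this explains why the bound obtained, $\mu(\pi)\leq\nicefrac{5}{2}$, although conditional, would be dramatically below the unconditional bound $7.6063\ldots$ of Salikhov~\cite{Salikhov2008}.
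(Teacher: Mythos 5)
Your proof is correct and matches the paper's own argument exactly: convergence of the series forces the terms $\frac{1}{n^3\cdot\sin(n)^2}$ to tend to zero, and then Corollary~\ref{Cor1} with $u=3$, $v=2$ gives $\mu(\pi)\leq 1+\nicefrac{3}{2}=\nicefrac{5}{2}$. Nothing further is needed.
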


\begin{proof}
Convergence of $\sum_{n=1}^{\infty} \frac{1}{n^3\cdot \sin(n)^2}$ implies that $\lim\limits_{n\to\infty} \frac{1}{n^3\cdot \sin(n)^2} = 0$ 
and thus by Corollary~\ref{Cor1}, $\mu(\pi)\leq\nicefrac{5}{2}$.
\end{proof}

\begin{theorem}\label{Tsum}
For positive real numbers $u$ and $v$,
if $\mu(\pi)< 1+\nicefrac{(u-1)}{v}$, then $\sum_{n=1}^{\infty} \frac{1}{n^u\cdot |\sin(n)|^v}$ converges.
\end{theorem}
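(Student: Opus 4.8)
The plan is to deduce the claim directly from the asymptotic estimate already proven in Theorem~\ref{Tmain} and then finish with the comparison test against a convergent $p$-series; no new arithmetic input about $\pi$ is needed. First I would record that Theorem~\ref{Tmain} gives, for every $\epsilon>0$,
$$\frac{1}{n^u\cdot|\sin(n)|^v} = O\!\left(\frac{1}{n^{u-(\mu(\pi)-1)v-\epsilon}}\right),$$
so the entire question reduces to whether $\epsilon>0$ can be chosen so that the exponent $u-(\mu(\pi)-1)v-\epsilon$ strictly exceeds $1$. This is possible exactly when $u-1-(\mu(\pi)-1)v>\epsilon$, and the hypothesis $\mu(\pi)<1+\nicefrac{(u-1)}{v}$ (using $v>0$) is precisely the statement that $u-1-(\mu(\pi)-1)v>0$.

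Concretely, I would set $\epsilon=\tfrac12\big(u-1-(\mu(\pi)-1)v\big)>0$, so that the exponent becomes $u-(\mu(\pi)-1)v-\epsilon = 1+\epsilon>1$. Noting that $\sin(n)\neq 0$ for every positive integer $n$ (since $\pi$ is irrational), every term of the series is finite, and
$$\sum_{n=1}^{\infty}\frac{1}{n^u\cdot|\sin(n)|^v} = \sum_{n=1}^{\infty} O\!\left(\frac{1}{n^{1+\epsilon}}\right).$$
Since $\sum_{n\ge 1} n^{-(1+\epsilon)}$ converges for $\epsilon>0$, the comparison test yields convergence of the original series: only finitely many initial terms lie outside the range where the $O$-bound is valid, and they contribute a finite amount.

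I do not anticipate a genuine obstacle here, because the substantive step — translating rational approximations of $\pi$ into a lower bound for $|\sin(n)|$ — has already been carried out inside the proof of Theorem~\ref{Tmain}. What remains is only the elementary bookkeeping of selecting $\epsilon$ and invoking the $p$-series test. The one point that deserves care is that the inequality in the hypothesis must be \emph{strict}: a non-strict bound $\mu(\pi)\le 1+\nicefrac{(u-1)}{v}$ would force $\epsilon\to 0$ and the argument would collapse. This is consistent with the situation for the Flint Hills series itself, where $u=3$, $v=2$ give $1+\nicefrac{(u-1)}{v}=2\le\mu(\pi)$, so Theorem~\ref{Tsum} does not apply and the convergence question is left open.
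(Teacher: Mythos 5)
Your proof is correct and follows essentially the same route as the paper: both invoke the $O\left(n^{-(u-(\mu(\pi)-1)v-\epsilon)}\right)$ bound from Theorem~\ref{Tmain}, choose $\epsilon$ so that the exponent exceeds $1$, and conclude by comparison with a convergent $p$-series (the paper writes this as $O(\zeta(w))$). Your added remarks --- that $\sin(n)\neq 0$ keeps every term finite, and that the hypothesis must be strict --- are accurate and slightly more careful than the paper's own write-up.
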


\begin{proof} The inequality $\mu(\pi)< 1+\nicefrac{(u-1)}{v}$ implies that $u-v\cdot(\mu(\pi)-1)>1$. Then there exists $\epsilon>0$ such that $w=u-v\cdot(\mu(\pi)-1)-\epsilon>1$.
By Theorem~\ref{Tmain}, $\frac{1}{n^u\cdot |\sin(n)|^v} = O\left(\frac{1}{n^w}\right)$ further implying that
$$\sum_{n=1}^{\infty} \frac{1}{n^u\cdot |\sin(n)|^v} = O\left(\zeta(w)\right) = O(1).$$
\end{proof}

\begin{corollary}\label{Csercon}
For positive real numbers $u$ and $v$,
if $\sum_{n=1}^{\infty} \frac{1}{n^u\cdot |\sin(n)|^v}$ diverges, then $\mu(\pi) \geq 1+\nicefrac{(u-1)}{v}$.
\end{corollary}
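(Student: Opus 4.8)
The plan is to observe that Corollary~\ref{Csercon} is nothing more than the contrapositive of Theorem~\ref{Tsum}, so the entire argument reduces to a one-line logical manipulation; no new estimates are needed. Concretely, I would argue by contradiction: suppose the series $\sum_{n=1}^{\infty} \frac{1}{n^u\cdot |\sin(n)|^v}$ diverges, yet $\mu(\pi) < 1+\nicefrac{(u-1)}{v}$. The second assumption is precisely the hypothesis of Theorem~\ref{Tsum}, which then yields that the same series converges --- contradicting the first assumption. Hence $\mu(\pi) \geq 1+\nicefrac{(u-1)}{v}$.

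The only point requiring a moment of care is the boundary of the inequality: Theorem~\ref{Tsum} is stated with a strict inequality $\mu(\pi) < 1+\nicefrac{(u-1)}{v}$, so its logical negation is the non-strict $\mu(\pi) \geq 1+\nicefrac{(u-1)}{v}$, which is exactly what the corollary claims. I would make this explicit so that the reader sees the equality case is handled correctly (divergence is consistent with $\mu(\pi) = 1+\nicefrac{(u-1)}{v}$, and the corollary asserts no more than that). Everything else --- the role of $u, v > 0$, the behavior of $\zeta(w)$ for $w > 1$, the $O$-estimate from Theorem~\ref{Tmain} --- is already packaged inside Theorem~\ref{Tsum} and need not be reopened here.

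There is essentially no obstacle: the ``hard part'' of the result lives entirely in Theorem~\ref{Tmain} and Theorem~\ref{Tsum}, and this corollary is a cost-free restatement in contrapositive form. If I wanted to pad the proof I could spell out the logical equivalence $(P \Rightarrow Q) \iff (\lnot Q \Rightarrow \lnot P)$ with $P$ being ``$\mu(\pi) < 1+\nicefrac{(u-1)}{v}$'' and $Q$ being ``the series converges,'' but a single sentence invoking Theorem~\ref{Tsum} by contraposition suffices and is the presentation I would adopt.
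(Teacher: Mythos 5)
Your proof is correct and matches the paper's intent exactly: the corollary is stated without proof precisely because it is the contrapositive of Theorem~\ref{Tsum}, and your careful note that negating the strict inequality yields the non-strict one is the only point worth making explicit.
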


Unfortunately, the divergence of the Flint Hills series would not imply any non-trivial result per Corollary~\ref{Csercon}.

\section{Known bounds for $\mu(\pi)$ and their implications}

Since $\pi$ is a transcendental number, $\mu(\pi)\geq 2$. To the best of our knowledge, no better lower bound for $\mu(\pi)$ is currently known.

The upper bound for $\mu(\pi)$ has been improved over the past decades. Starting with the bound $\mu(\pi)\leq 30$ established by Mahler in 1953~\cite{Mahler1953}, it was improved to $\mu(\pi)\leq 20$ by Mignotte in 1974~\cite{Mignotte1974}, 
and then to $\mu(\pi)\leq 19.8899944\ldots$ by Chudnovsky in 1982~\cite{Chudnovsky1982}. In 1990-1993 Hata in a series of papers~\cite{Hata1990,Hata1993a,Hata1993b} 
decreased the upper bound down to $\mu(\pi)\leq 8.016045\ldots$. The best currently known upper bound $\mu(\pi)\leq 7.6063\ldots$ was obtained in 2008 by Salikhov~\cite{Salikhov2008}. 

By Theorem~\ref{Tmain}, the Salikhov's bound implies that the sequence $\frac{1}{n^u\cdot |\sin(n)|^v}$ converges to zero as soon as
$1+\nicefrac{u}{v} > 7.6063$, including in particular the pairs $(u,v) = (7,1)$, $(14,2)$, $(20,3)$ etc. Correspondingly, Theorem~\ref{Tsum} further implies 
that the series $\sum_{n=1}^{\infty} \frac{1}{n^u\cdot |\sin(n)|^v}$ converges for $(u,v) = (8,1)$, $(15,2)$, $(21,3)$ etc.


\begin{footnotesize}
\bibliographystyle{plain} 
\bibliography{flint.bib} 

\newcommand{\noopsort}[1]{}
\begin{thebibliography}{10}

\bibitem{Apery1979}
R.~Apery.
\newblock Irrationalite de $\zeta(2)$ et $\zeta(3)$.
\newblock {\em Asterisque}, 61:11--13, 1979.

\bibitem{Chudnovsky1982}
G.~V. Chudnovsky.
\newblock {Hermite-pade approximations to exponential functions and elementary
  estimates of the measure of irrationality of $\pi$}.
\newblock {\em Lecture Notes in Math.}, 925:299--322, 1982.

\bibitem{Hata1990}
M.~Hata.
\newblock Legendre type polynomials and irrationality measures.
\newblock {\em J. Reine Angew. Math.}, 407:99--125, 1990.

\bibitem{Hata1993a}
M.~Hata.
\newblock A lower bound for rational approximations to $\pi$.
\newblock {\em J. Number Theor.}, 43(1):51--67, 1993.

\bibitem{Hata1993b}
M.~Hata.
\newblock Rational approximations to $\pi$ and some other numbers.
\newblock {\em Acta Arith.}, 63(4):335--349, 1993.

\bibitem{Mahler1953}
K.~Mahler.
\newblock On the approximation of $\pi$.
\newblock {\em Nederl. Akad. Wetensch. Proc. Ser. A}, 56:30--42, 1953.

\bibitem{Mignotte1974}
M.~Mignotte.
\newblock Approximations rationnelles de $\pi$ et quelques autres nombres.
\newblock {\em Bull. Soc. Math. Fr., Suppl.}, 37:121--132, 1974.

\bibitem{Pickover2002}
C.~A. Pickover.
\newblock {\em {The Mathematics of Oz: Mental Gymnastics from Beyond the
  Edge}}, volume~2.
\newblock {Cambridge University Press}, 2002.
\newblock Chapter 25 ``Flint Hills Series''.

\bibitem{Roth1955}
K.~F. Roth.
\newblock Rational approximations to algebraic numbers.
\newblock {\em Mathematika}, 2:1--20, 168, 1955.

\bibitem{Salikhov2008}
V.~Kh. Salikhov.
\newblock {On the Irrationality Measure of pi}.
\newblock {\em {Russ. Math. Surv}}, 63(3):570--572, 2008.

\bibitem{Poorten1979}
A.~van~der Poorten.
\newblock A proof that euler missed...
\newblock {\em The Mathematical Intelligencer}, 1(4):195--203, 1979.

\end{thebibliography}
\end{footnotesize}

\end{document}